\def\be{\begin{equation}}
\def\ee{\end{equation}}
\def\0{^{\phantom 0}}
\def\9{_{\phantom q}}
\newtheorem{theorem}{Theorem}[section]
\newtheorem{lemma}[theorem]{Lemma}
\newtheorem{proposition}[theorem]{Proposition}
\theoremstyle{remark}
\newtheorem{remark1}[theorem]{Remark}
\theoremstyle{definition}
\newtheorem{definition1}[theorem]{Definition}
\newcommand{\A}{\mathbb A}
\newcommand{\V}{\mathbb V}
\newcommand{\Vv}{\mathscr V}
\newcommand{\Z}{\mathbb Z}
\newcommand{\oV}{\overline{V}\!}
\newcommand{\OO}{\mathcal O}
\newcommand{\UU}{\mathcal U}
\newcommand{\VV}{\mathcal V}
\DeclareMathOperator\Mat{Mat}
\DeclareMathOperator\GL{GL}
\DeclareMathOperator\GLO{\GL_{1}}
\DeclareMathOperator\SL{SL}
\DeclareMathOperator\SLT{\SL_{2}}
\begin{document}

\title{Integral points on varieties defined by matrix factorization into elementary matrices}

\subjclass[2010]{Primary 20G30; Secondary 11C20, 11G35}
\keywords{matrix factorization, integral points, elementary matrices,  $\SLT$}

\author{Bruce~W.~Jordan}
\address{Department of Mathematics, Baruch College, The City University
of New York, One Bernard Baruch Way, New York, NY 10010-5526, USA}
\email{bruce.jordan@baruch.cuny.edu}

\author{Yevgeny~Zaytman}
\address{Center for Communications Research, 805 Bunn Drive, Princeton, 
NJ 08540-1966, USA}
\email{ykzaytm@idaccr.org}

\begin{abstract}
Let $\OO$ be the ring of $S$-integers in a number field $K$.
For $A\in \SLT(\OO)$ and $k\geq 1$, we define matrix-factorization
varieties $V_k(A)$ over $\OO$ which parametrize factoring $A$ into a 
product of $k$ elementary matrices; the equations defining $V_k(A)$ are
written in terms of Euler's continuant polynomials.  We show that the $V_k(A)$
are rational $(k-3)$-folds with an inductive fibration structure.  We combine this
geometric structure with arithmetic results to study the Zariski closure
of the $\OO$-points of $V_k(A)$.  We prove that for  $k\geq 4$ the $\OO$-points on
$V_k(A)$ are Zariski dense if $V_{k}(A)(\OO)\neq\emptyset$ assuming the group of units $\OO^{\times}$ is infinite.  This shows that if $A$ can be written
as a product of $k\geq 4$ elementary matrices, then this can be done
in infinitely many ways in the strongest sense possible.
This can then be combined with results on factoring into elementary matrices for $\SLT(\OO)$.
One result is that for $k\geq 9$ the $\OO$-points on $V_{k}(A)$ are Zariski dense if $\OO^{\times}$ is infinite.
\end{abstract}

\maketitle

\section{Introduction}
\label{rabbit}

Let $K$ be a number field and let $S$ be a finite set
of primes of $K$ containing the archimedean valuations.
Denote by $\OO=\OO_S$ the ring of $S$-integers in $K$:
\[
\OO=\OO_{S}=\{x\in K^{\times} \mid v(x)\geq 0 \text{ for all $v\notin S$}\}.
\]
Factoring $1$ in $\OO^\times=\GLO(\OO)$ is naturally parametrized by associated
factorization varieties.  For $k\geq 2$, set
\begin{equation}
\label{factor}
\VV_k=\VV_k(1): 1=x_1\cdots x_k .
\end{equation}
The (affine) variety $\VV_k$ is smooth, rational, irreducible, and of dimension $k-1$.
When $k=2$, $\VV_2$ is a curve of genus $0$  with $2$ points at infinity.
Because there are fewer than $3$ points at $\infty$ on this rational curve, Siegel's Theorem
\cite{siegel}, \cite[Chapter 7]{serre-mw} does not force $\VV_2$ to have finitely many $\OO$-points.
And, indeed, $\VV_2(\OO)$ is Zariski dense if and only if $\OO^{\times}$
is infinite.  It is easy to see that this generalizes for $k>2$:
\begin{theorem}
\label{viper}
The $\OO$-points on $\VV_k$ for $k\geq 2$
are Zariski dense if and only if the group of units $\OO^\times$ is infinite.
\end{theorem}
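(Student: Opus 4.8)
The plan is to reduce to the elementary observations that an infinite subset of an irreducible curve is Zariski dense and that Zariski density is preserved by finite products. First I would note that projection to the first $k-1$ coordinates identifies $\VV_k$ with $\GLO^{k-1}$ over $\OO$: on $\VV_k$ every coordinate is a unit because $x_1\cdots x_k=1$, while conversely $(x_1,\dots,x_{k-1})$ ranges freely over $\GLO^{k-1}$ with $x_k=(x_1\cdots x_{k-1})^{-1}$ then forced. Under this isomorphism $\VV_k(\OO)$ corresponds to $(\OO^{\times})^{k-1}$; in particular $\VV_k(\OO)\neq\emptyset$ always (it contains $(1,\dots,1)$), so no nonemptiness hypothesis is needed here, in contrast with the varieties $V_k(A)$ to come.

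For the forward implication, suppose $\OO^{\times}$ is infinite. Since $\GLO$ over $K$ is an irreducible curve, its only proper Zariski-closed subsets are finite, so any infinite subset of $\OO^{\times}$ is Zariski dense in it. It then suffices to apply the standard lemma that if $D_i\subseteq X_i$ is Zariski dense in an irreducible variety $X_i$ for $i=1,\dots,m$, then $D_1\times\cdots\times D_m$ is Zariski dense in $X_1\times\cdots\times X_m$; one proves this by peeling off a factor at a time, observing that a closed set containing $D_1\times\cdots\times D_m$ contains $\{(d_1,\dots,d_{m-1})\}\times X_m$ for each $(d_1,\dots,d_{m-1})\in D_1\times\cdots\times D_{m-1}$, so it contains $D_1\times\cdots\times D_{m-1}\times X_m$, and iterating. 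Taking $X_i=\GLO$ over $K$, $D_i=\OO^{\times}$, and $m=k-1$ gives that $(\OO^{\times})^{k-1}=\VV_k(\OO)$ is Zariski dense in $\VV_k$. Conversely, if $\OO^{\times}$ is finite then $\VV_k(\OO)$ is a finite set, which cannot be dense in $\VV_k$ because $\dim\VV_k=k-1\geq1$ for $k\geq2$.

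I do not expect any genuine obstacle: the theorem is a warm-up, and its only content is the base case---an infinite set of units is dense in $\GLO$---plus the bookkeeping of the product. If one wishes to avoid invoking the product lemma, the forward implication can be argued directly: a Laurent polynomial $f\in K[x_1^{\pm1},\dots,x_{k-1}^{\pm1}]$ vanishing on $(\OO^{\times})^{k-1}$ must be identically zero, which follows by induction on the number of variables after specializing all but one variable to elements of $\OO^{\times}$ and using that a nonzero Laurent polynomial in one variable has only finitely many zeros. Either way, this argument serves as the template for the much harder Zariski-density results for $V_k(A)$ established later in the paper, where the simple product structure is replaced by the inductive fibration structure of the $V_k(A)$.
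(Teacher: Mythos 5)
Your proposal is correct, and it supplies exactly the argument the paper omits: the paper states Theorem \ref{viper} as an easy observation without proof, and your identification of $\VV_k$ with $\GLO^{k-1}$ via the first $k-1$ coordinates (so that $\VV_k(\OO)$ corresponds to $(\OO^{\times})^{k-1}$), together with the density of a product of dense subsets and the trivial converse for finite $\OO^{\times}$, is the intended reasoning. No gaps; both the product-lemma route and the Laurent-polynomial induction you sketch are valid.
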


We would like to generalize this picture to matrices and consider integral points on
matrix-factorization varieties.  In this paper we give one 
such generalization to $\SLT(\OO)$, considering the varieties parametrizing
factoring into a product of elementary matrices.
For $x\in \OO$ define the upper triangular matrix $U(x)$,
the lower triangular matrix $L(x)$, and the matrix $D(x)$ by
\begin{equation}
  \label{elementary}
  U(x):= \begin{pmatrix} 1 & x\\0& 1\end{pmatrix},
    \quad L(x):=\begin{pmatrix} 1 & 0\\
    x & 1\end{pmatrix}, \quad D(x):=
\begin{pmatrix} x & 1\\1 & 0\end{pmatrix}.
\end{equation}
The {\em elementary matrices} over $\OO$ are the
matrices $U(x)$, $L(x)$ for $x\in\OO$; $\det D(x)=-1$.
Set $t:=\left(\begin{smallmatrix}0&1\\1&0\end{smallmatrix}\right)$
and note the identities
\begin{equation}
\label{identities}
D(x)t=U(x)\quad\mbox{and}\quad t D(x)=L(x).
\end{equation}

We consider the (affine) matrix-factorization varieties defined by writing 
$A\in\SLT(\OO)$
as a product of $k$ matrices from \eqref{elementary}:
\begin{definition1}
\label{products}
{\rm
Suppose $k\geq 1$.\\
1. Define the variety $V_{k}(A)$ by writing
$A$ a product of $k$ elementary matrices beginning with a \emph{lower}
triangular matrix:
\begin{equation}
\label{lower}
V_k(A):\quad A=
\begin{cases}
  L(x_1)U(x_2)L(x_3)U(x_4) \cdots L(x_k)\quad\mbox{if $k$ is odd}\\
  L(x_1)U(x_2)L(x_3)U(x_4)\cdots U(x_k)\quad\mbox{if $k$ is even.}
\end{cases}
\end{equation}
2.  Define the variety $V^k(A)$ by writing 
$A$ a product of $k$ elementary matrices beginning with an \emph{upper}
triangular matrix:
\begin{equation}
\label{upper}
V^k(A):\quad A=
\begin{cases}
  U(x_1)L(x_2)U(x_3)L(x_4) \cdots U(x_k)\quad\mbox{if $k$ is odd}\\
  U(x_1)L(x_2)U(x_3)L(x_4)\cdots L(x_k)\quad\mbox{if $k$ is even.}
\end{cases}
\end{equation}
3.  Put $\V_k(A)=V_k(A)\coprod V^k(A)$.  Note that $\V_k(A)(\OO)\neq
\emptyset$ if and only if $A$ can be written as the product of $k$
elementary matrices.\\[.05in]
4.  Define the variety $\oV_k(A)$ by writing $A$ as product of $k$ of the
$D(x)$'s and $t^k$:
\begin{equation}
\label{D}
\oV_k(A):\quad A=
D(x_1)D(x_2)D(x_3)\cdots D(x_k)t^k.
\end{equation}
}
\end{definition1}

In this paper our main result is the following analogue for matrix-factorization
varieties of Theorem \ref{viper}:
\begin{theorem}
\label{dense2}
Suppose $A\in\SLT(\OO)$ and let $\Vv_{k}(A)$ be 
$V_{k}(A)$, $V^{k}(A)$, or $\oV_{k}(A)$.
Assume the group of units $\OO^\times$ is infinite.
\\
\textup{1)} Suppose $k\geq 9$.  Then the $\OO$-points on the rational and irreducible 
$(k-3)$-fold $\Vv_k(A)$ are Zariski dense.
\\
\textup{2)} Suppose $S$ contains a real archimedean or a finite prime.  Then the $\OO$-points
on $\Vv_k(A)$ are Zariski dense for $k\geq 8$.
\\
\textup{3)} Assume the Generalized Riemann Hypothesis as in \cite[Section 4]{jz}. Then the $\OO$-points on $\Vv_k(A)$ are
Zariski dense for $k\geq 5$ if $S$ contains a real archimedean prime, 
for $k\geq 6$ if $S$
contains a finite prime, and for $k\geq 7$ in general.  
\end{theorem}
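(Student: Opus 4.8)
The plan is to reduce the three cases $\Vv_k(A)=V_k(A),V^k(A),\oV_k(A)$ to the single variety $V_k(A)$, and then to combine the density theorem for $V_k(A)$ proved earlier in the paper --- for $k\ge4$ with $\OO^\times$ infinite, $V_k(A)(\OO)$ is Zariski dense as soon as it is nonempty --- with bounded-generation results for $\SLT(\OO)$; the numerical thresholds $9$, $8$ and $5,6,7$ come entirely from the bounded-generation side.

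For the reduction I would first observe that $\oV_k(A)$ and $V^k(A)$ are cut out by the same equations. From \eqref{identities} one has $D(x)=U(x)t=tL(x)$, so inserting copies of $t^2$ between consecutive factors collapses $D(x_1)D(x_2)\cdots D(x_k)t^k$ identically to $U(x_1)L(x_2)U(x_3)L(x_4)\cdots$ of length $k$, which is the defining relation \eqref{upper}; hence $\oV_k(A)=V^k(A)$ as subschemes of $\A^k$. Next, conjugation by $t$ satisfies $tU(x)t=L(x)$ and $tL(x)t=U(x)$, and the same device shows $L(x_1)U(x_2)\cdots=t\,[U(x_1)L(x_2)\cdots]\,t$ identically; therefore $V^k(A)=V_k(tAt)$ as subschemes of $\A^k$. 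Since $tAt\in\SLT(\OO)$ and $A\mapsto tAt$ is a bijection of $\SLT(\OO)$, it suffices to prove the theorem for $\Vv_k(A)=V_k(A)$ and all $A\in\SLT(\OO)$; that $V_k(A)$ is rational, irreducible and of dimension $k-3$ is among the structural results established earlier, and these properties are inherited by $V^k(A)$ and $\oV_k(A)$ through the identifications above.

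By the density theorem it remains only to show $V_k(A)(\OO)\ne\emptyset$, i.e. that every $A\in\SLT(\OO)$ is a product of exactly $k$ elementary matrices of the alternating shape \eqref{lower}, for $k$ in the asserted ranges. I would get this from bounded generation and a padding remark: since $U(0)$ and $L(0)$ are the identity matrix, one may append an identity elementary matrix to an alternating elementary factorization --- increasing its length by one and preserving the leading triangular type --- and one may prepend a single identity elementary matrix to switch the leading type. Consequently an alternating elementary factorization of $A$ of some length $\le m$ produces one of the exact form \eqref{lower} of every length $k\ge m+1$. Feeding in the bounded-generation theorems for $\SLT(\OO)$ valid when $\OO^\times$ is infinite --- which provide such a factorization with $m=8$ in general, $m=7$ when $S$ contains a real archimedean or a finite prime, and $m=4,5,6$ under the Generalized Riemann Hypothesis of \cite[Section~4]{jz} in the three respective cases --- gives $V_k(A)(\OO)\ne\emptyset$ for $k\ge9$, $k\ge8$, and $k\ge5,6,7$, which (the condition $k\ge4$ being automatic here) is exactly parts 1)--3).

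The real content is carried by the two imported ingredients: the density theorem for $V_k(A)$ with $k\ge4$, proved through the inductive fibration structure of $V_k(A)$ together with the arithmetic of its fibres and the infinitude of $\OO^\times$; and the sharp bounded-generation bounds for $\SLT(\OO)$, the GRH-conditional ones being the most delicate. Within this deduction the only point needing care is the padding count --- verifying that one arrives in the form \eqref{lower} at exactly the prescribed length, with the correct off-by-one against the bounded-generation constant --- while the scheme identifications $\oV_k(A)=V^k(A)=V_k(tAt)$ and the transfer of rationality, irreducibility and dimension through them are routine.
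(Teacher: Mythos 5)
Your overall architecture is the same as the paper's: identify $\oV_k(A)=V^k(A)$ and $V^k(A)=V_k(tAt)=V_k(A')$ (your conjugation-by-$t$ argument is exactly the paper's Proposition \ref{one}, since $tAt=A'$), then invoke the density statement (Theorem \ref{dense1}) to reduce everything to nonemptiness of $V_k(A)(\OO)$. The gap is in the nonemptiness step: the bounded-generation constants you feed in ($m=8$ in general, $m=7$ when $S$ has a real archimedean or finite prime, $m=4,5,6$ under GRH) are not what the literature provides. The known results are: at most $9$ elementary matrices in general when $\OO^\times$ is infinite \cite{mrs}, at most $8$ when $S$ contains a real archimedean or a finite prime \cite{jz}, and $5,6,7$ under GRH \cite{jz}. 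With the correct constants, your normalization of the leading triangular type --- prepending one identity factor $L(0)$, which costs $+1$ --- only yields $V_k(A)(\OO)\neq\emptyset$ for $k\geq 10$, $k\geq 9$, and $k\geq 6,7,8$ respectively, i.e.\ one worse than every threshold in the theorem. So as written the proof either rests on unproved strengthenings of the input theorems or proves a weaker statement.

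The paper closes exactly this off-by-one by using more than the bare bounded-generation bounds: as recorded in Remark \ref{gone}, the constructions of \cite{mrs} and \cite{v} produce factorizations that \emph{begin with a lower triangular matrix}, and the construction of \cite{jz} begins with an upper or lower factor according to the parity of $k$; combining this with $V^k(A)=V_k(A')$ and the fact that $A\mapsto A'$ is a bijection of $\SLT(\OO)$ gives a point of $V_k(A)$ already at the exact thresholds $k=9$, $8$, $5/6/7$, with end-padding by $U(0)$ or $L(0)$ (which preserves the leading type) covering larger $k$. Note that the leading-type information is genuinely needed: knowing only that every matrix is a product of at most $m$ alternating elementary factors of unknown leading type, a factorization of $A$ beginning with an upper factor, or one of $A'$ beginning with a lower factor, each lands in $V_m(A')$ rather than $V_m(A)$, so for a fixed $A$ one cannot reach $V_m(A)(\OO)\neq\emptyset$ without either the extra prepended factor or the information about which triangular type the construction starts with. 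Your own flag that ``the only point needing care is the padding count'' is exactly right --- but the count only works out because your quoted inputs are one better than what is actually known.
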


\noindent Roughly speaking, Theorem \ref{dense2} 
says that if $\#\OO^\times =\infty$, then for $k$ 
sufficiently large,
not only can every $A\in\SLT(\OO)$ be written as the product of $k$
elementary matrices over $\OO$, but this can be done in
(infinitely) many ways in the strongest possible sense.

\section{First properties of the Matrix-Factorization Varieties
$V_k(A)$, $V^k(A)$, and~$\oV_k(A)$}
\label{first}

In fact we only have to study one of the three matrix-factorization 
varieties in Definition \ref{products} and 
the three factorization problems are equivalent.
Let 
\begin{equation}
  \label{weather}
A:=   \begin{pmatrix} a & c \\ b& d\end{pmatrix}\in\SLT(\OO),\ 
A':=   \begin{pmatrix} d & b \\ c & a\end{pmatrix}\in\SLT(\OO),\ \mbox{and}\ 
A^{\ast}:= \begin{pmatrix} d & c \\ b & a\end{pmatrix}\in\SLT(\OO),
\end{equation}
with $A^{t}=\left(\begin{smallmatrix}a & b\\c&d\end{smallmatrix}\right)$.
Then we have $A''=A^{tt}=A^{\ast\ast}=A$ and 
\begin{equation}
\label{snow}
A^{\prime t}=A^\ast,\quad A^{t \ast}=A',\quad A^{\ast \prime}=A^t,\quad
A^{\prime \ast}=A^t, \quad A^{t \prime}=A^{\ast}, \quad A^{\ast t}=A'.
\end{equation}
In other words, $\{\prime , \,t, \ast= \prime t\}$ are the 
elements of order $2$ in a Klein Vierergruppe acting on the set $\SLT(\OO)$.

\begin{proposition}
\label{one}
\textup{1. }Suppose $A\in\SLT(\OO)$.
The polynomial equations in $x_1, \, x_2, \, \ldots, x_k$
defining $V_k(A)$, $V^k(A')$, and $\oV_k(A')$ in $\A^{k}_{/\OO}$
are identical. The following are equivalent:\\[.05in]
\textup{a) } $(a_1, a_2,\ldots , a_k)\in V_{k}(A)(\OO)\subseteq \A^{k}(\OO)$.\\
\textup{b) } $\begin{cases} A=L(a_1)U(a_2)\cdots L(a_k)\quad
\mbox{if $k$ is odd}\\
A=L(a_1)U(a_2)\cdots U(a_k)\quad\mbox{ if $k$ is even}.  \end{cases}$\\
\textup{c) }  $(a_1, a_2,\ldots , a_k)\in V^{k}(A')(\OO)\subseteq \A^{k}(\OO)$.\\
\textup{d) }  $\begin{cases} A'=U(a_1)L(a_2)\cdots U(a_k)\quad
\mbox{if $k$ is odd}\\
A'=U(a_1)L(a_2)\cdots L(a_k)\quad\mbox{if $k$ is even}.  \end{cases}$\\
\textup{e) } $(a_1, a_2,\ldots , a_k)\in \oV_{k}(A')(\OO)\subseteq \A^{k}(\OO)$\\
\textup{f) } $A'=D(a_1)D(a_2)\cdots D(a_k)t^k$.\\[.1in]
\textup{2. } In particular, the following are equivalent:\\[.05in]
\textup{a) } For every matrix $A\in\SLT(\OO)$, $V_{k}(A)(\OO)\neq\emptyset$.\\
\textup{b) } For every matrix $A\in\SLT(\OO)$, $V^{k}(A)(\OO)\neq\emptyset$.\\
\textup{c) } For every matrix $A\in\SLT(\OO)$, $\oV_{k}(A)(\OO)\neq\emptyset$.
\end{proposition}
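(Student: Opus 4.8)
The plan is to prove Proposition \ref{one} by a direct computation with the elementary matrices, exploiting the involution identities \eqref{identities} and the Klein four-group action of \eqref{snow}. The heart of the matter is part 1, since part 2 follows immediately: once we know $V_k(A)$, $V^k(A')$, and $\oV_k(A')$ are cut out by literally the same polynomials in $\A^k_{/\OO}$, and that $A\mapsto A'$ is a bijection of $\SLT(\OO)$, the equivalence of the three ``$\neq\emptyset$ for all $A$'' statements is formal. So I would state part 2 as a one-line corollary of part 1 and focus the write-up on the identity of the defining equations.

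For part 1, I would first treat the equivalence (a)$\iff$(b) and (c)$\iff$(d) and (e)$\iff$(f) as tautological — these are just the definitions of $\OO$-points of $V_k(A)$, $V^k(A')$, $\oV_k(A')$ spelled out. The substance is (b)$\iff$(d)$\iff$(f), together with the claim that the \emph{polynomial} identities agree (not merely the $\OO$-point sets). For (d)$\iff$(f): using \eqref{identities}, $D(x)t = U(x)$ and $tD(x) = L(x)$, and $t^2 = I$, I can telescope a product $D(a_1)D(a_2)\cdots D(a_k)t^k$ by inserting $t^2=I$ between consecutive factors. Writing $D(a_1)D(a_2)\cdots D(a_k)t^k = D(a_1)\,t\,(t D(a_2))\,t\,(tD(a_3))\,\cdots$, one converts it into an alternating product of $U$'s and $L$'s: the first factor $D(a_1)t = U(a_1)$, then $tD(a_2)\cdot\!$ wait — more carefully, I would track parity so that the $k$ factors $D(a_i)$ with the trailing $t^k$ regroup exactly as $U(a_1)L(a_2)U(a_3)\cdots$, matching the right-hand side of \eqref{upper} for $A'$. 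This is the kind of bookkeeping that should be done once, cleanly, by induction on $k$ or by an explicit regrouping. For (b)$\iff$(d): take transposes. Since $U(x)^t = L(x)$ and $L(x)^t = U(x)$, transposing the equation $A = L(a_1)U(a_2)\cdots$ reverses the order of factors and swaps $U\leftrightarrow L$; one then checks using \eqref{weather}–\eqref{snow} that the resulting matrix is $A'$ (or $A^\ast$, depending on parity of $k$) and that, after possibly also applying the $\prime$ or $\ast$ symmetry, the index set of the variables can be relabeled $a_i \mapsto a_{k+1-i}$ — but since the claim asserts the equations are \emph{identical} in the \emph{same} variables $x_1,\dots,x_k$, I need to be careful that the relabeling is in fact trivial, i.e., that the map is set up so no reversal occurs. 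This likely forces the specific choice of $A'$ versus $A^\ast$ in the three cases, and checking parity here is where I expect to have to be most careful.

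Concretely, I would organize the computation around the continuant polynomials alluded to in the abstract: the product $L(x_1)U(x_2)\cdots$ has entries that are (signed) continuants in $x_1,\dots,x_k$, and likewise for the $U$-first and $D$-product versions. The cleanest argument simply records that all three products, when multiplied out, yield the \emph{same} matrix of polynomials up to the fixed relabeling of the four entries $a,b,c,d$ dictated by \eqref{weather}, and hence the four scalar equations ``(entry) $=$ (continuant)'' coincide as subschemes of $\A^k_{/\OO}$.

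The main obstacle, as I see it, is not any deep idea but rather getting the parity bookkeeping exactly right: the definitions in \eqref{lower} and \eqref{upper} case-split on $k$ even versus odd, the identities \eqref{identities} introduce factors of $t$ whose accumulated product is $t^k$ (which is $I$ or $t$ depending on parity), and the transpose/Klein-four symmetries each permute $\{A,A',A^\ast,A^t\}$ in a way that interacts with that parity. I would handle this by first doing the odd-$k$ case completely, then the even-$k$ case, rather than trying to write a single uniform formula; and I would double-check the final index alignment by testing $k=2$ and $k=3$ by hand against \eqref{weather}. Everything else — the tautological reformulations in terms of $\A^k(\OO)$, and the deduction of part 2 — is routine once part 1 is in place.
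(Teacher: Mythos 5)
Your overall architecture matches the paper's: part 2 is indeed a formal consequence of part 1, the equivalences (a)$\iff$(b), (c)$\iff$(d), (e)$\iff$(f) are definitional, and your telescoping of $D(a_1)\cdots D(a_k)t^k$ by inserting $t^2=I$ (so $D(x)t=U(x)$, $tD(x)=L(x)$, with the trailing $t$ absorbed when $k$ is odd) is exactly how the paper identifies the $\oV_k$ and $V^k$ equations. The continuant bookkeeping you describe is not needed for this proposition; it only enters later, in Proposition \ref{equations}.

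The one genuine problem is your mechanism for (b)$\iff$(d). Transposition is an anti-automorphism: it reverses the order of the factors as well as swapping $U\leftrightarrow L$, so from $A=L(a_1)U(a_2)\cdots$ you get a factorization of $A^t$ (equivalently, after applying $\ast$, of $A^{t\ast}=A'$) in the \emph{reversed} variables $a_k,\ldots,a_1$. That is precisely the content of Proposition \ref{two}, not of the claim here, which asserts the equations of $V_k(A)$ and $V^k(A')$ are identical in the \emph{same} variables $x_1,\ldots,x_k$; no choice among $A'$, $A^\ast$, $A^t$ can undo the index reversal by itself. You flag this worry but leave it unresolved. The fix is to abandon the transpose and use the operation the paper uses: $A\mapsto A'=tAt$ is conjugation by $t$, hence a genuine automorphism of $\SLT$, and it satisfies $L(x)'=U(x)$, $U(x)'=L(x)$. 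Applying it termwise (a one-line induction on $k$) gives
\begin{equation*}
[L(x_1)U(x_2)L(x_3)\cdots]'=U(x_1)L(x_2)U(x_3)\cdots
\end{equation*}
with no reversal and no parity issue beyond which letter the word ends in. (Equivalently, your transpose followed by $\ast$ works, since both are anti-automorphisms and the two reversals cancel, with $A^{t\ast}=A'$ — but that composition has to be said explicitly; as written, the step is a gap.) With that substitution, your proposal coincides with the paper's proof.
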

\begin{proof}
1)  The varieties $\oV_k(A)$ and $V^k(A)$ are defined by the same 
equations.\\
If $k$ is even, then 
\begin{align*}
A &= D(x_1)D(x_2)D(x_3)D(x_4)\cdots D(x_k)\\
  &=D(x_1)ttD(x_2)D(x_3)ttD(x_4)\cdots D(x_{k-1})ttD(x_k)\\
  &= U(x_1)L(x_2)U(x_3)L(x_4)\cdots U(x_{k-1})L(x_k)
\quad\mbox{using \eqref{identities}}.
\end{align*}
If  $k$ is odd, then similarly
\begin{equation*}
A=D(x_1)\cdots D(x_k)t=U(x_1)L(x_2)U(x_3)L(x_4)\cdots U(x_k),
\end{equation*}
proving 1).\\
2)  The varieties $V_k(A)$ and $V^k(A')$ are defined by the 
same equations.\\
We have $L(x)'=U(x)$ and $U(x)'=L(x)$.  Verify by induction
that for $k$ odd 
\[
[L(x_1)U(x_2)L(x_3)\cdots L(x_k)]'=U(x_1)L(x_2)U(x_3)\cdots U(x_k)
\]
and for $k$ even
\[
[L(x_1)U(x_2)L(x_3)\cdots U(x_k)]'=U(x_1)L(x_2)U(x_3)\cdots L(x_k),
\]
proving 2).
\end{proof}

\begin{proposition}
\label{two}
Suppose $A\in\SLT(\OO)$. The following are equivalent:\\[.05in]
\textup{a) } $(a_1, a_2,\ldots , a_k)\in V_{k}(A)(\OO)\subseteq \A^{k}(\OO)$.\\
\textup{b) }  $\begin{cases} (a_k, a_{k-1}, \ldots , a_1)\in V_{k}(A^{\ast})(\OO)\subseteq \A^{k}(\OO)\quad
\mbox{if $k$ is odd}\\
(a_k,a_{k-1}, \ldots , a_1)\in V_{k}(A^t)(\OO)\subseteq \A^{k}(\OO)\quad\mbox{if $k$ is even}.  \end{cases}$
\end{proposition}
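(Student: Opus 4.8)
The plan is to prove this by taking the transpose of the matrix equation defining $V_k(A)$ and reading off the factorization of the transposed matrix in reverse order. Recall from \eqref{elementary} that $U(x)^t = L(x)$ and $L(x)^t = U(x)$. Starting from an $\OO$-point $(a_1,\dots,a_k)\in V_k(A)(\OO)$, so that $A = L(a_1)U(a_2)L(a_3)\cdots$ (ending in $L(a_k)$ if $k$ is odd, in $U(a_k)$ if $k$ is even), I would apply the transpose anti-automorphism to both sides. Since $(XY)^t = Y^t X^t$, transposing reverses the order of the product and swaps each $L$ with the corresponding $U$:
\[
A^t = (L(a_1)U(a_2)\cdots)^t = \cdots U(a_2)^t L(a_1)^t = \cdots L(a_2)U(a_1),
\]
where the rightmost factor is always $U(a_1)$ (coming from the leftmost $L(a_1)$), and the leftmost factor is $U(a_k)$ if $k$ is odd (coming from $L(a_k)^t$) or $L(a_k)$ if $k$ is even (coming from $U(a_k)^t$).

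Now I would compare this with the defining equations of the varieties on the right-hand side of statement (b). If $k$ is odd, the transposed product $A^t = U(a_k)L(a_{k-1})\cdots L(a_2)U(a_1)$ is a product starting with an \emph{upper} triangular matrix in the variables $a_k, a_{k-1},\dots,a_1$; by the definition of $V^k$ in \eqref{upper}, this says $(a_k,\dots,a_1)\in V^k(A^t)(\OO)$. Then I invoke Proposition \ref{one}, part 1, with $A^t$ in place of $A$: since $(A^t)' = A^\ast$ by the relation $A^{t\prime} = A^\ast$ in \eqref{snow}, the equivalence (c)$\Leftrightarrow$(a) of Proposition \ref{one} converts membership in $V^k(A^t)(\OO)$ into membership in $V_k(A^\ast)(\OO)$, giving exactly statement (b) in the odd case. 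If $k$ is even, the transposed product $A^t = L(a_k)U(a_{k-1})\cdots L(a_2)U(a_1)$ starts with a \emph{lower} triangular matrix, so directly $(a_k,\dots,a_1)\in V_k(A^t)(\OO)$, which is statement (b) in the even case — no appeal to Proposition \ref{one} is even needed there.

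Alternatively, and perhaps more cleanly, I would work with the $D$-form: using $\oV_k$ from \eqref{D}, an $\OO$-point corresponds to $A = D(a_1)\cdots D(a_k)t^k$ after the identifications of Proposition \ref{one}, and since $D(x)^t = D(x)$ and $t^t = t$, transposing gives $A^t = t^k D(a_k)\cdots D(a_1)$; conjugating to move $t^k$ to the right (using that $t D(x) t = D(x)$ is \emph{false} in general — rather $t D(x) t^{-1}$ swaps nothing useful), so in fact the cleanest route really is the $L$/$U$ computation above. The whole argument is a bookkeeping exercise with the transpose anti-automorphism together with the Klein-four relations \eqref{snow}; the only place requiring genuine care is the parity tracking — which end of the reversed word picks up an $L$ versus a $U$, and correspondingly whether one lands in $V_k$ or $V^k$ of the transposed matrix — and that is where I expect the sole (minor) obstacle to lie. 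A short induction on $k$, as used in the proof of Proposition \ref{one}, would make the parity bookkeeping airtight.
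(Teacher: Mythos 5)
Your argument is correct and is essentially the paper's own proof: the paper likewise transposes the factorization, uses $L(x)^t=U(x)$ and $U(x)^t=L(x)$ to reverse the word, and in the odd case invokes Proposition \ref{one} together with the relation $A^{t\prime}=A^{\ast}$ from \eqref{snow} to land in $V_k(A^{\ast})$, while the even case gives $V_k(A^t)$ directly. The abandoned aside about the $D$-form is harmless and can simply be dropped.
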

\begin{proof}
Proposition \ref{two}(a) is equivalent to 
Proposition \ref{one}(b).
We need only note that if $k$ is odd and $A=L(a_1)U(a_2)\cdots L(a_k)$, then 
\begin{equation}
\label{odd}
[L(a_1)U(a_2)\cdots L(a_k)]^t=U(a_k)L(a_{k-1})\cdots U(a_1)= A^t
\end{equation}
which is equivalent to $(a_k,a_{k-1},\ldots , a_1)\in V_k(A^{t\prime})(\OO)=V_k(A^{\ast})(\OO)$
by Proposition \ref{one}(d).  Likewise if $k$ is even and $A=L(a_1)U(a_2)\cdots U(a_k)$, then
$L(a_k)U(a_{k-1})\cdots U(a_1)=A^t$, equivalently, $(a_k,a_{k-1},\ldots , a_1)\in V_k(A^t)$.
\end{proof}

Henceforth we restrict our attention to $V_k(A)$; in light of
Proposition \ref{one} and its proof all results are easily transferred
to the matrix factorization problems associated to $V^k(A)$ and $\oV_k(A)$.
In particular, note that we could equally well put
$\V_k(A)=V_k(A)\coprod V_k(A')$ since $V^k(A)$ and $V_k(A')$ are
defined by identical equations.
The explicit equations defining $V_k(A)$ over $\OO$ can be written down
in terms of the continuant polynomials $K_n$ of Euler \cite{e}.
The $K_n$ are defined recursively by
\begin{align}
K_{-2}=1, \quad K_{-1}&=0, \quad K_{0}=1,\quad K_1(x_1)=x_1, \\
\nonumber K_n(x_1,\ldots, x_n)&= K_{n-1}(x_1,\dots , x_{n-1})x_n+K_{n-2}(x_1,\ldots, x_{n-2})\\
\nonumber \mbox{or, equivalently, } K_n(x_1, \ldots, x_n)&=x_1 K_{n-1}(x_2,\ldots, x_n) + K_{n-2}(x_3, \ldots, x_n).
\end{align}
For example,
\begin{align*}
K_2(x_1,x_2)&=x_1x_2+1,\\
K_3(x_1,x_2, x_3)&=x_1x_2x_3+x_1+x_3,\\
K_4(x_1, x_2, x_3, x_4)&=x_1x_2x_3x_4+x_1x_2+x_1x_4+x_3x_4+1,\\
K_5(x_1,x_2,x_3,x_4,x_5)&=x_1x_2x_3x_4x_5+x_1x_2x_3+x_1x_2x_5+x_1x_4x_5+x_3x_4x_5+x_1+x_3+x_5.
\end{align*}
\begin{proposition}
\label{continuant}
\textup{1. } If $k$ is odd, 
\begin{equation*}
L(x_1)U(x_2)L(x_3)\cdots L(x_k)=\begin{pmatrix}K_{k-1}(x_2,x_3, \ldots, x_k) & K_{k-2}(x_2,x_3, \ldots, x_{k-1})\\
K_k(x_1,x_2, \ldots, x_k) & K_{k-1}(x_1,x_2, \ldots, x_{k-1})\end{pmatrix}.
\end{equation*}
\textup{2. }If $k$ is even,
\begin{equation*}
L(x_1)U(x_2)L(x_3)\cdots U(x_k)=\begin{pmatrix}K_{k-2}(x_2,x_3, \ldots, x_{k-1}) & K_{k-1}(x_2,x_3, \ldots, x_k)\\
K_{k-1}(x_1,x_2, \ldots, x_{k-1}) & K_k(x_1,x_2, \ldots, x_k)\end{pmatrix}.
\end{equation*}
\end{proposition}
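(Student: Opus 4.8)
The plan is to establish both formulas at once by induction on $k$, extending the product one elementary factor at a time on the right. The base case is $k=1$: here $L(x_1)=\begin{pmatrix}1&0\\x_1&1\end{pmatrix}$, and the right-hand side of part~1 with $k=1$ is $\begin{pmatrix}K_0&K_{-1}\\K_1(x_1)&K_0\end{pmatrix}$, which equals $\begin{pmatrix}1&0\\x_1&1\end{pmatrix}$ by the initial values $K_{-1}=0$, $K_0=1$, $K_1(x_1)=x_1$. One may also check $k=2$ directly against part~2 as a sanity test, though it is subsumed by the induction.

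For the inductive step I split on the parity of $k$. Suppose the part~1 formula holds for an odd $k$. Then $L(x_1)U(x_2)\cdots L(x_k)U(x_{k+1})$ is the displayed $2\times 2$ matrix times $U(x_{k+1})=\begin{pmatrix}1&x_{k+1}\\0&1\end{pmatrix}$; multiplying out, the first column is unchanged and the entries of the new second column are $K_{k-1}(x_2,\ldots,x_k)\,x_{k+1}+K_{k-2}(x_2,\ldots,x_{k-1})$ and $K_k(x_1,\ldots,x_k)\,x_{k+1}+K_{k-1}(x_1,\ldots,x_{k-1})$. Applying the recursion $K_n(x_1,\ldots,x_n)=K_{n-1}(x_1,\ldots,x_{n-1})x_n+K_{n-2}(x_1,\ldots,x_{n-2})$ --- once to the string $x_2,\ldots,x_{k+1}$ and once to $x_1,\ldots,x_{k+1}$ --- rewrites these as $K_k(x_2,\ldots,x_{k+1})$ and $K_{k+1}(x_1,\ldots,x_{k+1})$, which is precisely the matrix claimed in part~2 for the even index $k+1$. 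The case ``$k$ even, right-multiply by $L(x_{k+1})=\begin{pmatrix}1&0\\x_{k+1}&1\end{pmatrix}$'' is symmetric: now the second column is unchanged and the same recursion collapses the first column of the product to $K_k(x_2,\ldots,x_{k+1})$ and $K_{k+1}(x_1,\ldots,x_{k+1})$, yielding the part~1 formula for odd $k+1$.

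I expect no real obstacle: the argument is just two $2\times 2$ matrix multiplications and two invocations of the continuant recursion. The only place to be careful is the index bookkeeping --- tracking which contiguous sub-string of $(x_1,\ldots,x_{k+1})$ is the argument of each continuant, and checking that the subscript shift ($n\mapsto n-1$ on one diagonal, $n\mapsto n-2$ at the far corner) is consistent with the number of variables actually listed. If a left-to-right fibration description of $V_k(A)$ is wanted downstream, it is also worth recording the mirror statement, obtained by peeling $L(x_1)$ (resp.\ $U(x_1)$) off the left and using the second form $K_n(x_1,\ldots,x_n)=x_1K_{n-1}(x_2,\ldots,x_n)+K_{n-2}(x_3,\ldots,x_n)$ of the recursion.
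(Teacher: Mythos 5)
Your proof is correct and follows the same route as the paper, which simply says ``proceed by induction on $k$''; you have supplied the two parity cases of the inductive step explicitly, and the index bookkeeping (including the $k=1$ base case using $K_{-1}=0$, $K_0=1$) checks out. Nothing further is needed.
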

\begin{proof}
Proceed by induction on $k$.
\end{proof}
Hence we deduce:
\begin{proposition}
\label{equations}
\textup{1. }If $k$ is odd, the equations defining $V_k(A)$ are 
\begin{equation*}
 \begin{pmatrix} a & c \\ b& d\end{pmatrix}=\begin{pmatrix}K_{k-1}(x_2,x_3, \ldots, x_k) & K_{k-2}(x_2,x_3, \ldots, x_{k-1})\\
K_k(x_1,x_2, \ldots, x_k) & K_{k-1}(x_1,x_2, \ldots, x_{k-1})\end{pmatrix}.
\end{equation*}
\textup{2. }If $k$ is even, the equations defining $V_k(A)$ are
\begin{equation*}
 \begin{pmatrix} a & c \\ b& d\end{pmatrix}=\begin{pmatrix}K_{k-2}(x_2,x_3, \ldots, x_{k-1}) & K_{k-1}(x_2,x_3, \ldots, x_k)\\
K_{k-1}(x_1,x_2, \ldots, x_{k-1}) & K_k(x_1,x_2, \ldots, x_k)\end{pmatrix}.
\end{equation*}
\end{proposition}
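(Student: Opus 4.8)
The plan is simply to read the equations off the explicit matrix identity of Proposition~\ref{continuant}. By Definition~\ref{products}(1), the variety $V_k(A)\subseteq\A^{k}_{/\OO}$ is the closed subscheme cut out by the single matrix equation $A=L(x_1)U(x_2)L(x_3)\cdots L(x_k)$ when $k$ is odd and $A=L(x_1)U(x_2)L(x_3)\cdots U(x_k)$ when $k$ is even; unwinding this matrix equation entry by entry is precisely what produces the polynomial equations defining $V_k(A)$ in affine space. First I would substitute, for the right-hand side, the closed form of the product $L(x_1)U(x_2)\cdots$ supplied by Proposition~\ref{continuant}, and write $A=\left(\begin{smallmatrix}a&c\\b&d\end{smallmatrix}\right)$ as in \eqref{weather}. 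Comparing the $(1,1)$, $(1,2)$, $(2,1)$, $(2,2)$ entries of the two sides then yields exactly the displayed system of four equations, in the odd case and in the even case respectively. That is the whole argument: no input beyond Proposition~\ref{continuant} is needed.

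For completeness I would also supply the (omitted) proof of Proposition~\ref{continuant} itself, which is the only step involving a computation and which goes by induction on $k$. The base cases are $k=1$, where $L(x_1)=\left(\begin{smallmatrix}1&0\\x_1&1\end{smallmatrix}\right)$ agrees with the claimed matrix because $K_{-1}=0$, $K_0=1$, $K_1(x_1)=x_1$, and $k=2$, where $L(x_1)U(x_2)=\left(\begin{smallmatrix}1&x_2\\x_1&x_1x_2+1\end{smallmatrix}\right)$ agrees because $K_2(x_1,x_2)=x_1x_2+1$. For the step from $k$ to $k+1$ one multiplies the length-$k$ matrix on the right by the next elementary factor, namely $U(x_{k+1})$ if $k$ is odd and $L(x_{k+1})$ if $k$ is even. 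In either case exactly one column of the product is altered, and each of its two entries takes the form (old entry) $+\,x_{k+1}\cdot$(the other entry in the same row); by the recursion $K_n(y_1,\ldots,y_n)=K_{n-1}(y_1,\ldots,y_{n-1})y_n+K_{n-2}(y_1,\ldots,y_{n-2})$ applied to the argument strings $(x_1,\ldots,x_{k+1})$ and $(x_2,\ldots,x_{k+1})$, these are precisely the continuants $K_{k+1}(x_1,\ldots,x_{k+1})$ and $K_k(x_2,\ldots,x_{k+1})$, so the product matrix acquires exactly the shape claimed for length $k+1$, with the columns and continuant ranges reindexed correctly for the new parity.

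I do not anticipate a genuine obstacle: both statements are a mechanical translation of a matrix product into the language of continuants. The only thing demanding care is the bookkeeping --- keeping straight, as a function of the parity of $k$, whether the terminal factor is $L$ or $U$ (hence which column of the product is modified at each inductive step) and making sure the subscripts $K_{k-2}, K_{k-1}, K_k$ and the argument lists $(x_2,\ldots,x_{k-1})$, $(x_1,\ldots,x_{k-1})$, $(x_2,\ldots,x_k)$, $(x_1,\ldots,x_k)$ stay consistent between the even and odd cases. Once that is pinned down, both Proposition~\ref{continuant} and Proposition~\ref{equations} fall out with no real difficulty.
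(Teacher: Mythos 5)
Your proposal is correct and follows exactly the paper's route: the paper deduces Proposition~\ref{equations} directly by comparing entries in the matrix identity of Proposition~\ref{continuant}, whose own proof is the same induction on $k$ (multiplying on the right by $U(x_{k+1})$ or $L(x_{k+1})$ and applying the continuant recursion) that you spell out. No gaps; your write-up merely makes explicit the induction the paper leaves as ``proceed by induction on $k$.''
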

\section{The geometry of 
$V_k(A)$ }
\label{geometry}
An initial observation on $V_k(A)$ is that we know its dimension; a further observation
is that $V_{k}(A)$ is rational:
\begin{proposition}
\label{dimension}
For $k\ge3$, we have $\dim V_k(A)=k-3$ unless $k=3$ and $c=0$.  It is
irreducible unless $k=4$ and $a=1$ or $k=5$ and $A=L(b)$.  In the case
$k\ge 3$ with $c\neq 0$ if $k=3$, $V_k(A)$ is also rational
\textup{(}or the union of two irreducible rational varieties\textup{)}
with a birational map $V_k(A)\to\A^{k-3}$ given by
\begin{equation}\label{map}(x_1,\ldots,x_k)\mapsto(x_4,\ldots,x_k)\end{equation} \textup{(}with the other
component given
by \begin{equation}\label{map2}(x_1,\ldots,x_k)\mapsto(x_3,x_5,\ldots,x_k)\end{equation}
in the case when $V_k(A)$ is reducible\textup{)} and inductively
fibered.
\end{proposition}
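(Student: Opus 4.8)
The plan is to exhibit $V_k(A)$ as (the closure of) a fibration over an open subset of $\A^{k-3}$ by peeling off the first three factors. Fix $(x_4,\dots,x_k)$, let $M=M(x_4,\dots,x_k)$ be the product of the last $k-3$ elementary matrices appearing in \eqref{lower}, and set $N:=AM^{-1}$, with entries $N_{ij}$. Then $(x_1,\dots,x_k)\in V_k(A)$ if and only if $L(x_1)U(x_2)L(x_3)=N$, and multiplying out
\[
L(x_1)U(x_2)L(x_3)=\begin{pmatrix}1+x_2x_3 & x_2\\ x_1(1+x_2x_3)+x_3 & 1+x_1x_2\end{pmatrix}
\]
shows that the projection $\pi\colon V_k(A)\to\A^{k-3}$, $(x_1,\dots,x_k)\mapsto(x_4,\dots,x_k)$, has fibre over a point where $N_{12}\neq0$ equal to the single reduced point $x_2=N_{12}$, $x_3=(N_{11}-1)/N_{12}$, $x_1=(N_{22}-1)/N_{12}$ (the remaining defining equation being forced by $\det N=1$), while over a point with $N_{12}=0$ the fibre is empty unless $N_{11}=N_{22}=1$, in which case it is the affine line $\{x_2=0,\ x_1+x_3=N_{21}\}$. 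By Proposition \ref{continuant}, $q:=N_{12}$ is the explicit polynomial $cK_{k-3}(x_4,\dots,x_k)-aK_{k-4}(x_4,\dots,x_{k-1})$ for $k$ odd (and an analogous one for $k$ even), which vanishes identically exactly when $k=3$ and $c=0$; moreover one checks from the continuant recursions that $N_{11}$ involves only $x_5,\dots,x_k$ and that $q=-N_{11}x_4+(\text{terms free of }x_4)$.

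With this in hand, set $U=\{q\neq0\}$ and $Z=\{q=0\}$ in $\A^{k-3}$, and recall that, $V_k(A)$ being a fibre of the dominant product morphism $\A^k\to\SLT$ with $\SLT$ three-dimensional, every irreducible component of $V_k(A)$ has dimension $\ge k-3$. If $q\not\equiv0$, then $\pi$ restricts to an isomorphism $\pi^{-1}(U)\xrightarrow{\,\sim\,}U$. Since $q=-N_{11}x_4+\cdots$ with $N_{11}$ free of $x_4$, the polynomials $q$ and $N_{11}-1$ are coprime unless $N_{11}\equiv1$; in the coprime case the locus $\pi^{-1}(Z)$, supported over $\{q=0\}\cap\{N_{11}=1\}$ with one-dimensional fibres, has dimension $\le k-4$ and therefore contains no component of $V_k(A)$, so $V_k(A)=\overline{\pi^{-1}(U)}$ is irreducible, rational, of dimension $k-3$, with \eqref{map} as birational inverse. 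The base case $k=3$, $c\neq0$ is $M=I$, $N=A$, giving the single point $x_2=c$, $x_3=(a-1)/c$, $x_1=(d-1)/c$; and $k=3$ with $c=0$ yields $V_3(A)=\emptyset$ or $V_3(L(b))\cong\A^1$, which is why that case is excluded.

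It remains to treat $N_{11}\equiv1$. Then $q=-x_4+h(x_5,\dots,x_k)$, so $Z=\{x_4=h\}\cong\A^{k-4}$, the fibre over every point of $Z$ is a line, and $\pi^{-1}(Z)\cong Z\times\A^1\cong\A^{k-3}$ is a second irreducible rational component on which $x_2=0$, $x_4=h$, and $x_1$ is determined by $x_3$ — i.e.\ a component for which \eqref{map2} is a birational (in fact bijective) chart; as these two components have distinct images under $\pi$, $V_k(A)$ is precisely the union of two rational varieties. Reading $N_{11}$ off Proposition \ref{continuant} (it equals $a$ when $k=4$, $a-cx_5$ when $k=5$, and is non-constant for $k\ge6$) shows $N_{11}\equiv1$ exactly when $k=4$ and $a=1$, or $k=5$ and $A=L(b)$, which is the exception list in the statement. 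Finally, the ``inductively fibered'' claim: right-multiplying \eqref{lower} by $L(-x_k)$ when $k$ is odd, or $U(-x_k)$ when $k$ is even, identifies the fibre of $(x_1,\dots,x_k)\mapsto x_k$ over $\tau$ with $V_{k-1}(A\,L(-\tau))$, resp.\ $V_{k-1}(A\,U(-\tau))$; iterating this $k-3$ times reconstitutes the factorization $A=L(x_1)U(x_2)L(x_3)\cdot M(x_4,\dots,x_k)$ used above, so $V_k(A)$ is an iterated fibration over $\A^1$ by matrix-factorization varieties of the same shape one step shorter.

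The main obstacle I expect is the elementary but delicate bookkeeping underlying the middle steps: deducing from the continuant recursions that $q=-N_{11}x_4+\cdots$ with $N_{11}$ depending only on $x_5,\dots,x_k$, then the coprimality of $q$ and $N_{11}-1$ away from the exceptional locus (this is what forces $\pi^{-1}(Z)$ to drop dimension there), and finally the extraction from Proposition \ref{continuant} of the exact pairs $(k,A)$ with $N_{11}\equiv1$, together with the verification that \eqref{map} and \eqref{map2} are birational charts on the correct components. Once these are settled, the dimension, rationality, and irreducibility assertions follow formally.
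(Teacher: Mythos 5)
Your proof is correct, but it takes a genuinely different route from the paper's. The paper argues by induction on $k$: it fibers $V_k(A)\to\A^1$ over the last coordinate $x_k$, identifies the fibers with $V_{k-1}(AL(x_k)^{-1})$ (resp.\ $V_{k-1}(AU(x_k)^{-1})$), solves the base case $k=3$ explicitly, and then treats the exceptional cases $k=4,5,6$ by inspecting when the generic fiber is irreducible, invoking the ``cut out by three equations'' dimension bound only at $k=5$. You instead collapse the induction into the single projection $\pi\colon V_k(A)\to\A^{k-3}$ onto $(x_4,\dots,x_k)$, solve the paper's $k=3$ system once and for all in terms of $N=AM^{-1}$, and handle irreducibility uniformly: Krull's bound gives every component dimension $\ge k-3$, while coprimality of $q=N_{12}$ and $N_{11}-1$ forces $\pi^{-1}(\{q=0\})$ to have dimension $\le k-4$ except precisely when $N_{11}\equiv1$; your bookkeeping claims do check out against Proposition \ref{continuant} (for $k$ odd one finds $q=cK_{k-3}(x_4,\dots,x_k)-aK_{k-4}(x_4,\dots,x_{k-1})$ and $N_{11}=aK_{k-5}(x_5,\dots,x_{k-1})-cK_{k-4}(x_5,\dots,x_k)$, with the analogous even formulas, so $q\equiv0$ only for $k=3$, $c=0$, and since $(a,c)\neq(0,0)$ one gets $N_{11}\equiv 1$ exactly for $k=4$, $a=1$ and $k=5$, $A=L(b)$). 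The only point left tacit is that in the exceptional case the condition $N_{22}=1$ holds automatically on $\{q=0\}$ by $\det N=1$, so the fibre there really is a line; that is a one-line remark, not a gap. As for what each approach buys: the paper's induction is shorter and yields the inductive fibration structure (used in Theorem \ref{dense1}) directly, at the cost of a somewhat ad hoc analysis of $k=4,5,6$; your argument derives the exceptional list and both charts \eqref{map} and \eqref{map2} from one uniform criterion with a transparent dimension count, and then recovers the ``inductively fibered'' statement at the end by reinstating the one-variable fibration, exactly as the paper defines it.
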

\begin{proof}
We proceed by induction on $k$.  For $k=3$ by Proposition
\ref{equations} $V_3(A)$ is given by
\begin{equation*}
 \begin{pmatrix} a & c \\ b& d\end{pmatrix}=\begin{pmatrix}x_2x_3+1 & x_2\\
x_1x_2x_3+x_1+x_3 & x_1x_2+1\end{pmatrix}.
\end{equation*}
Hence unless $c=0$ the unique solution is given by
$$(x_1,x_2,x_3)=\left(\frac{d-1}c,c,\frac{a-1}c\right).$$  Notice that
in the case when $c=0$, the variety is empty unless we also have $a=1$
in which case it's a rational curve with a birational map to $\A^1$ given by $x_3$.

Now assume the result holds for $k-1$ by induction.  Consider the
fibration $V_k(A)\to \A^1$, given by $(x_1,x_2, \ldots, x_k)\mapsto
x_k$.  Notice the fibers of this map are given by
$V_{k-1}(AL(x_k)^{-1})$ for $k$ odd and $V_{k-1}(AU(x_k)^{-1})$ for
$k$ even.  Either way the generic fibers will be generically
inductively fibered rational varieties of dimension $k-4$ by the
induction hypothesis.  Hence, $V_k(A)$ is an inductively fibered
rational variety of dimension $k-3$.

In fact the fibers will be rational and irreducible unless $k\le6$.
We now describe the cases.  If $k=4$, the fibers will be either a
point or empty unless $a=1$ in which case one of the fibers is a
rational curve. Hence if $a=1$, then $V_4(A)$ is the reducible union
of two rational curves with maps to $\A^1$ given by $x_4$ and $x_3$
respectively.

If $k=5$, the generic fiber is irreducible with birational map given
by (\ref{map}) unless $A=L(b)$.  Hence for $A\neq L(b)$, $V_5(A)$ is
either irreducible or has a component of dimension $1$ which is
impossible since it is given by $3$ equations (the fourth is redundant
since the determinant is $1$), and thus can't have any components of
dimension less than $5-3=2$.  On the other hand if $A=L(b)$ then all
the components are reducible; hence here $V_5(A)$ is the union of two
rational surfaces with birational maps given by (\ref{map}) and
(\ref{map2}).

If $k\ge 6$, the generic fiber is always irreducible; therefore, $V_6(A)$
is always irreducible with birational map given by (\ref{map}) by the
same logic as above.
\end{proof}
\section{The arithmetic of $V_k(A)$}
\label{arithmetic}
We begin with a rather remarkable identity, which can be verified simply
by multiplying matrices.
\begin{lemma}
\label{identity}
For $v\in K^\times$, set 
\begin{align}
x_1(v) &= x_1 +\frac{(1-v^{-1})x_3}{1+x_2x_3}\\
\nonumber x_2(v) &= vx_2\\
\nonumber x_3(v) &= v^{-1}x_3\\
\nonumber x_4(v)&=x_4+\frac{(1-v)x_2}{1+x_2x_3}.
\end{align}
Then we have an equality in $\Mat_{2\times2}(K(x_1,x_2,x_3,x_4))$\textup{:}
\begin{equation}
\label{equality}
L(x_1)U(x_2)L(x_3)U(x_4)=L(x_1(v))U(x_2(v))L(x_3(v))U(x_4(v)).
\end{equation}
\end{lemma}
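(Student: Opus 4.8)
The plan is to organize the verification so that the four‑factor identity \eqref{equality} collapses to a single $2\times 2$ matrix identity, which can then be checked painlessly by multiplication — in the spirit of the remark preceding the lemma, but with the computation arranged so that almost nothing needs to be computed.

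First I would isolate the middle two factors. A direct multiplication gives
\begin{equation*}
M := U(x_2)L(x_3) = \begin{pmatrix} 1+x_2x_3 & x_2\\ x_3 & 1\end{pmatrix}\in\SLT\!\bigl(K(x_1,x_2,x_3,x_4)\bigr),
\end{equation*}
whose $(1,1)$ entry $1+x_2x_3$ is a unit in $K(x_1,x_2,x_3,x_4)$. The key observation is that the prescribed scalings $x_2\mapsto vx_2$, $x_3\mapsto v^{-1}x_3$ fix the product $x_2x_3$, and so act on $M$ by conjugation by the diagonal matrix $g:=\left(\begin{smallmatrix} v & 0\\ 0 & 1\end{smallmatrix}\right)$:
\begin{equation*}
U\bigl(x_2(v)\bigr)L\bigl(x_3(v)\bigr) = \begin{pmatrix} 1+x_2x_3 & vx_2\\ v^{-1}x_3 & 1\end{pmatrix} = gMg^{-1}.
\end{equation*}

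Next I would absorb the two remaining substitutions into the outer factors. Set $\alpha := x_1(v)-x_1 = (1-v^{-1})x_3/(1+x_2x_3)$ and $\beta := x_4(v)-x_4 = (1-v)x_2/(1+x_2x_3)$. Since $L(a)L(b)=L(a+b)$ and $U(a)U(b)=U(a+b)$, we have $L\bigl(x_1(v)\bigr)=L(x_1)L(\alpha)$ and $U\bigl(x_4(v)\bigr)=U(\beta)U(x_4)$. Hence the left‑hand side of \eqref{equality} equals $L(x_1)\,M\,U(x_4)$ while the right‑hand side equals $L(x_1)\,L(\alpha)\,(gMg^{-1})\,U(\beta)\,U(x_4)$; cancelling the invertible factors $L(x_1)$ on the left and $U(x_4)$ on the right, the lemma becomes equivalent to the single identity
\begin{equation*}
M = L(\alpha)\,\bigl(gMg^{-1}\bigr)\,U(\beta).
\end{equation*}

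Finally I would verify this last identity by multiplying out the three explicit $2\times2$ matrices on the right. The $(1,1)$ entry is automatic, and the $(1,2)$, $(2,1)$, $(2,2)$ entries reduce respectively to $\beta(1+x_2x_3)=(1-v)x_2$, $\alpha(1+x_2x_3)=(1-v^{-1})x_3$, and $v\alpha x_2+\beta x_3 = 0$ — the first two being the definitions of $\beta$ and $\alpha$, the last holding because $v(1-v^{-1})+(1-v)=0$. There is no genuine obstacle here; the only thing requiring care is the bookkeeping between $v$ and $v^{-1}$ and between the positions of $L$ and $U$, and the reduction above is designed precisely so that this bookkeeping is trivial. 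As a brute‑force alternative one may instead expand both sides of \eqref{equality} directly as $2\times2$ matrices over $K(x_1,\dots,x_4)$ using Proposition \ref{continuant}.
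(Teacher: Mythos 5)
Your proof is correct, and it is essentially the paper's argument: the paper offers no proof beyond the remark that the identity ``can be verified simply by multiplying matrices,'' which is exactly what you do. Your reduction --- writing $U(x_2(v))L(x_3(v))$ as a conjugate $gMg^{-1}$ of $M=U(x_2)L(x_3)$ and cancelling $L(x_1)$, $U(x_4)$ so that only $M=L(\alpha)\,(gMg^{-1})\,U(\beta)$ needs checking --- is just a cleaner organization of that same direct verification, and all the entrywise computations you cite are accurate.
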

Suppose $P=(x_1, x_2,x_3,x_4)\in V_{4}(A)(\OO)$ with 
$A=\left(\begin{smallmatrix}a&c\\b&d\end{smallmatrix}\right)\in\SLT(\OO)$
as in
\eqref{weather}.
Then by Proposition \ref{equations}(2), we have
$a=K_2(x_2,x_3)=1+x_2x_3$.  Hence if 
$v\equiv 1 \bmod a$, then by \eqref{equality} of Lemma \ref{identity}
\begin{equation}
\label{rodent}
v.P:=(x_1(v),x_2(v),x_3(v),x_4(v))\in V_4(A)(\OO).
\end{equation}
\begin{proposition}
\label{dense}
Suppose $A\in\SLT(\OO)$ and suppose $\OO^\times$ is infinite.  If
$V_4(A)(\OO)\neq \emptyset$, then the $\OO$-points are Zariski dense.
\end{proposition}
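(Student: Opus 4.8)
The plan is to split on the value of the $(1,1)$-entry $a$ of $A$: the unit action of Lemma~\ref{identity} (equivalently \eqref{rodent}) requires $a\neq0$ to be well defined, and Proposition~\ref{dimension} only guarantees that $V_4(A)$ is irreducible when $a\neq1$, so the two values $a=0$ and $a=1$ are handled separately, directly from the continuant equations.

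\emph{Step 1: the degenerate cases $a=0$ and $a=1$.} When $a=1$, the first equation of Proposition~\ref{equations}(2) forces $x_2x_3=0$, so $V_4(A)$ is the union of the closed subschemes $\{x_2=0\}$ and $\{x_3=0\}$; substituting either into the remaining equations makes them linear, one becoming redundant by $\det A=1$, and one finds that each component is an affine line over $\OO$, explicitly $\{(x_1,0,b-x_1,c):x_1\in\A^1\}$ and $\{(b,x_2,0,c-x_2):x_2\in\A^1\}$. When $a=0$ one has $\det A=-bc=1$, so $b,c\in\OO^\times$; now $x_2x_3=-1$, and the remaining equations force $x_2=c$, $x_3=b$, and a single linear relation $cx_1+bx_4=d-1$, so $V_4(A)\cong\A^1_{/\OO}$. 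In every case the $\OO$-points of each affine line are visibly Zariski dense, and $V_4(A)(\OO)\neq\emptyset$ automatically, so the proposition holds here — indeed this part uses nothing about $\OO^\times$.

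\emph{Step 2: the generic case $a\neq0,1$.} By Proposition~\ref{dimension}, $V_4(A)$ is then an irreducible affine curve, so it suffices to exhibit infinitely many distinct $\OO$-points, since an infinite set of points on an irreducible curve is automatically Zariski dense. Fix $P=(x_1,x_2,x_3,x_4)\in V_4(A)(\OO)$, which exists by hypothesis, and set $\Gamma:=\ker\bigl(\OO^\times\to(\OO/a\OO)^\times\bigr)$. Since $a\neq0$ the ring $\OO/a\OO$ is finite, so $\Gamma$ has finite index in $\OO^\times$; since $\OO^\times$ is finitely generated (Dirichlet's $S$-unit theorem) and infinite by hypothesis, $\Gamma$ is infinite. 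For every $v\in\Gamma$ we have $v\equiv1\bmod a\OO$, so $v.P\in V_4(A)(\OO)$ by \eqref{rodent}. Finally $a\neq1$ gives $x_2x_3=a-1\neq0$, hence $x_2\neq0$, and the second coordinate of $v.P$ equals $vx_2$; thus $v\mapsto v.P$ is injective on $\Gamma$, and $\{v.P:v\in\Gamma\}$ is an infinite subset of $V_4(A)(\OO)$, proving density.

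The only place requiring care is Step~1: precisely where the unit action is ill defined ($a=0$) or degenerate and in any case unnecessary ($a=1$), the variety $V_4(A)$ collapses to a union of affine lines, and one must read this off the continuant equations rather than from Lemma~\ref{identity}. Once that bookkeeping is done, the proposition follows immediately from Lemma~\ref{identity}, Proposition~\ref{dimension}, and the finiteness of $\OO/a\OO$.
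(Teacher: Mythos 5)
Your proof is correct and follows essentially the same route as the paper: the unit action of Lemma~\ref{identity} via \eqref{rodent} for $a\neq0,1$ combined with irreducibility from Proposition~\ref{dimension}, and explicit affine-line parametrizations in the degenerate cases. The only cosmetic difference is at $a=0$, where you solve the continuant equations to see $V_4(A)\cong\A^1$ directly, while the paper exhibits the same infinite family by the translation $(x_1+ux_3,x_2,x_3,x_4-ux_2)$; your injectivity check via the coordinate $vx_2$ is a welcome detail the paper leaves implicit.
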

\begin{proof}
First assume $a\ne0$.  Set $\UU_{1}(a)=\{v\in\OO^{\times}\mid v\equiv
1 \bmod a\}$. Since $\OO^\times$ is infinite by assumption,
$\UU_1(a)\subseteq \OO^\times$ is infinite too.  If $P\in
V_4(A)(\OO)$, then $v.P$ for $v\in \UU_{1}(a)$ as in \eqref{rodent} is
an infinite set of points in $V_4(A)(\OO)$.

Now suppose $a=0$, then $x_3=-x_2^{-1}$ and $$A=\begin{pmatrix} 0 & x_2
\\ x_3& x_1x_2 + x_3x_4 + 1\end{pmatrix}.$$  Hence for any $u\in\OO$,
we get $(x_1+ux_3,x_2,x_3,x_4-ux_2)\in V_4(A)(\OO)$.  Thus we again
get an infinite set of points in $V_4(A)(\OO)$.

In the case when $V_4(A)$ is irreducible this implies that the
$\OO$-points are Zariski dense.  By Proposition \ref{dimension} this
leaves us with the case when $a=1$.  In that case we get $x_2x_3=0$ so
the two components correspond to $x_2=0$ and $x_3=0$.  For $u\in\OO$,
the formulas $(u,0,b-u,c)$ and $(b,c-u,0,u)$ give infinitely many
points on each of the two components.
\end{proof}
\begin{theorem}
\label{dense1}
Suppose $A\in\SLT(\OO)$ and suppose the group of units $\OO^\times$ is infinite.\\
\textup{1)}   If the $\OO$-points of $V_k(A)$ are Zariski dense for some $k\geq 4$, then 
the $\OO$-points of $V_{k'}(A)$ are Zariski dense for all $k'\geq k$.\\
\textup{2)} 
If $k\geq 4$ and 
$V_k(A)(\OO)\neq \emptyset$, then the $\OO$-points are Zariski dense.
\end{theorem}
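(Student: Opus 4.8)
The plan is to bootstrap from Proposition~\ref{dense} (the $k=4$ case) using the fibration structure from Proposition~\ref{dimension}. For part (1), suppose $V_k(A)(\OO)$ is Zariski dense and we want the same for $V_{k+1}(A)$. First I would exhibit a map relating the two varieties: given a point $(x_1,\ldots,x_k)\in V_k(A)(\OO)$ and any $x_{k+1}\in\OO$, we can prepend or append an elementary factor. The cleanest device is to note that $A = A L(x_{k+1})^{-1}\cdot L(x_{k+1})$ (or with $U$ in place of $L$ according to parity), so a factorization of $AL(x_{k+1})^{-1}$ into $k$ elementaries, followed by $L(x_{k+1})$, gives a factorization of $A$ into $k+1$ elementaries. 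Equivalently, the fibration $V_{k+1}(A)\to\A^1$ by $(x_1,\ldots,x_{k+1})\mapsto x_{k+1}$ has fiber over $x_{k+1}$ equal to $V_k(AL(x_{k+1})^{-1})$ (resp.\ $V_k(AU(x_{k+1})^{-1})$), exactly as in the proof of Proposition~\ref{dimension}.

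The key point then is that density propagates through this fibration. I would argue as follows: it suffices to produce, for infinitely many values $x_{k+1}=u\in\OO$, a Zariski-dense set of $\OO$-points in the fiber $V_k(AL(u)^{-1})$, because a Zariski-dense subset of $\A^1$ worth of fibers, each contributing a dense set, forces density of the total space (the fiber dimension is constant $k-3$ on a dense open, and $(k-3)+1 = (k+1)-3 = \dim V_{k+1}(A)$). Now $AL(u)^{-1}$ is again a matrix in $\SLT(\OO)$, and the relevant entry governing whether Proposition~\ref{dense}'s argument applies changes with $u$. More efficiently, since we only need $k\ge 4$, I would instead use the induction in the other direction: $V_k(A)(\OO)\ne\emptyset$ means $V_4(A')(\OO)\ne\emptyset$ for a suitable $A'$ obtained by absorbing the last $k-4$ factors — but that changes $A$, so the truly clean statement is part (2), which I address next, and then (1) follows because density is preserved when we pass from $V_k$ to $V_{k+1}$ by the fibration argument and density at level $k$ gives nonemptiness at all higher levels, hence density at all higher levels by (2).

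For part (2): assume $k\ge 4$ and $V_k(A)(\OO)\ne\emptyset$; pick $P=(x_1,\ldots,x_k)\in V_k(A)(\OO)$. Look at the truncation $(x_1,x_2,x_3,x_4)$: the product $L(x_1)U(x_2)L(x_3)U(x_4)$ equals some matrix $B\in\GL_2(\OO)$ (with $\det B = 1$), and $(x_1,x_2,x_3,x_4)\in V_4(B)(\OO)$, so $V_4(B)(\OO)\ne\emptyset$. By Proposition~\ref{dense} (applicable since $\OO^\times$ is infinite), $V_4(B)(\OO)$ is Zariski dense in $V_4(B)$; concretely, Lemma~\ref{identity} gives the one-parameter family $v.P_0 = (x_1(v),x_2(v),x_3(v),x_4(v))$ for $v\in\UU_1(a_B)$ in the $a_B\ne 0$ case, and the analogous additive family when $a_B=0$ or $a_B = 1$ as in the proof of Proposition~\ref{dense}. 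Each such modified $4$-tuple, followed by the unchanged tail $x_5,\ldots,x_k$, still multiplies out to $A$ by \eqref{equality}, so we get a positive-dimensional family inside $V_k(A)(\OO)$. To upgrade from "positive-dimensional family" to "Zariski dense" I would combine this with the inductive fibration: $V_k(A)\to\A^1$ by $x_k$, with generic fiber $V_{k-1}(AL(x_k)^{-1})$ or $V_{k-1}(AU(x_k)^{-1})$ of dimension $k-4\ge 0$. Varying $x_k$ over $\OO$ (infinitely many values) while applying the inductive hypothesis "$V_{k-1}(B')(\OO)\ne\emptyset\Rightarrow$ Zariski dense" to infinitely many nonempty fibers gives density of the union, hence of $V_k(A)(\OO)$; the base case $k=4$ is Proposition~\ref{dense}.

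The main obstacle I anticipate is the bookkeeping at the boundary of the induction and the handling of reducible $V_k(A)$ (the cases $k=4$, $a=1$ and $k=5$, $A=L(b)$ flagged in Proposition~\ref{dimension}): one must check that the fiber over a generic $x_k\in\OO$ is genuinely nonempty and that the $\OO$-points one constructs are Zariski dense in \emph{each} irreducible component, not just in one of them — but Proposition~\ref{dense} already does exactly this accounting at level $4$, and for $k\ge 6$ irreducibility removes the difficulty entirely, so the real work is just propagating the level-$4$ and level-$5$ analysis through finitely many steps. A secondary point to verify carefully is that infinitely many of the fibers $V_{k-1}(AL(x_k)^{-1})$ (as $x_k$ ranges over $\OO$) are nonempty: this follows because, starting from one known point $P\in V_k(A)(\OO)$ with last coordinate $x_k^{(0)}$, the Lemma~\ref{identity} deformations applied to coordinates further up the chain produce points of $V_k(A)(\OO)$ whose last coordinate takes infinitely many distinct values, or alternatively one deforms in a block not touching $x_k$ and then slides $x_k$ using the additive freedom exhibited in the $a=0$ case of Proposition~\ref{dense}.
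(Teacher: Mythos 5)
Your proposal is correct and takes essentially the same route as the paper: induction on $k$ with Proposition \ref{dense} as the base case, the fibration $(x_1,\ldots,x_k)\mapsto x_k$ whose fibers are $V_{k-1}(AL(x_k)^{-1})$ or $V_{k-1}(AU(x_k)^{-1})$, and part (1) deduced from part (2) by propagating nonemptiness upward (appending trivial factors). The only adjustment needed is the one you already supply in your final paragraph: the Lemma \ref{identity} deformation must be applied to the last four coordinates $(x_{k-3},\ldots,x_k)$ with $(x_1,\ldots,x_{k-4})$ fixed, rather than to the first four, so that $x_k$ takes infinitely many values and infinitely many fibers are nonempty --- which is exactly the paper's argument.
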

\begin{proof}
We start by proving (2).  We proceed by induction.  The $k=4$ case is
Lemma \ref{identity}.  Now consider the subvariety of $W\subset
V_k(A)$ given by fixing $(x_1,\ldots,x_{k-4})$.  Note that $W$ is
given by $V_4(B)$ for some matrix $B$ and is nonempty since
$(x_{k-3},\ldots,x_k)\in W$.  Thus by Lemma \ref{identity} the
$\OO$-points are Zariski dense in $W$ and in particular the images
of the $\OO$-points under the map to $\A^1$ given by $x_k$ from
Proposition \ref{dimension} are Zariski dense.  Also, by the induction
hypothesis the $\OO$-points of $V_k(A)$ are Zariski dense in each
of the fibers above each of these points in $\A^1$.  Hence, the
$\OO$-points are Zariski dense in $V_k(A)$.

Now (1) follows trivially from (2) and the fact that if $V_k(A)(\OO)\neq\emptyset$,
then $V_{k'}(A)(\OO)\neq\emptyset$ for $k'\geq k$ since
$(x_1,\ldots,x_{k})\in V_{k}(A)$ implies 
$(x_1,\ldots,x_{k},0,\ldots,0)\in V_{k'}(A)$.
\end{proof}
\begin{remark1}
\label{gone}
{\rm
There has been recent work on factoring matrices in
$\SLT(\OO)$ into products of elementary matrices.
Recall that the statement that $A\in \SLT(\OO)$
can be written as the product of $k$ elementary matrices is
equivalent to $\V_k(A)(\OO)\neq\emptyset$. However, the constructions in
\cite{mrs} and \cite{v} all write any $A\in\SLT(\OO)$ as a product of elementary
matrices beginning with a lower triangular matrix -- so in these cases we
can deduce that $V_k(A)(\OO)\neq \emptyset$.  The construction of \cite[Proof of Theorem 2.6]{jz}
writes any $A\in\SLT(\OO)$ as a product of $k$ elementary matrices beginning with an upper
triangular matrix if $k$ is odd and a lower triangular matrix if $k$ is even.
Hence in either case here we can also conclude that $V_{k}(A)(\OO)\neq\emptyset$.

Below we summarize known results:\\[.1in]
1)  For all $A\in\SLT(\OO)$, $V_{k}(A)(\OO)\neq \emptyset$ if $\OO^{\times}$ is infinite
and $k\geq 9$ (\cite[Remark (1), page 1969]{mrs}).\\
2)  Suppose $\OO^{\times}$ is infinite and $S$ contains a real archimedean prime or a finite prime.
Then for all $A\in \SLT(\OO)$ and for all  $k\geq 8$ we have $V_{k}(A)(\OO)\neq \emptyset$ (\cite[Theorem 1.2 and proof]{jz}).\\
3)  Assume the Generalized Riemann Hypothesis and suppose $\OO^{\times}$ is infinite.
Then for all $A\in\SLT(\OO)$ we have $V_{k}(A)(\OO)\neq \emptyset$ for $k\geq 5$ if $S$ contains a real
archimedean prime, for $k\geq 6$ if $S$ contains a finite prime, and for $k\geq 7$ in general (\cite[Theorem 1.3 and proof]{jz}).\\
4) Suppose $p$ is a prime.  Then for all $A\in\SLT(\Z[1/p])$ we have $V_{k}(A)(\Z[1/p])\neq \emptyset$ for $k\geq 5$
(\cite[Remark 1.1]{v}).\\
5)  There is a prime $p$ and matrix $A\in\SLT(\Z[1/p])$ such that $V_4(A)(\Z[1/p])=\emptyset$ (\cite[Proposition 5.1]{mrs}).
}
\end{remark1}
\begin{proof}[Proof of Theorem \ref{dense2}]
Combining these known results with Proposition \ref{one} and Theorem \ref{dense1} then proves our
main Theorem \ref{dense2}.
\end{proof}



\bibliographystyle{plain}
\bibliography{MFf}

\end{document}